\documentclass[12pt,twoside,reqno]{amsart}
\usepackage[colorlinks=false,citecolor=blue]{hyperref}
\usepackage{mathptmx, amsmath, amssymb, amsfonts, amsthm, enumerate, color, mathrsfs}
\usepackage{graphicx}
\usepackage{epstopdf}

\newcommand{\vertiii}[1]{{\left\vert\kern-0.25ex\left\vert\kern-0.25ex\left\vert #1 
    \right\vert\kern-0.25ex\right\vert\kern-0.25ex\right\vert}}
\newtheorem{theorem}{Theorem}[section]
\newtheorem{lemma}{Lemma}[section]
\newtheorem{remark}{Remark}[section]

\newtheorem{corollary}{Corollary}[section]

\numberwithin{equation}{section}

\begin{document}
\title{Integral Representations and Sharp Numerical Radius Inequalities for Hilbert Space Operators}
\author{Wasim Audeh, Fuad Kittaneh, and Hamid Reza Moradi}
\subjclass[2010]{Primary 47A30; Secondary 47A12, 47A63, 15A60}
\keywords{Numerical radius, operator norm, Cartesian decomposition, operator matrix, integral inequality, Hilbert space operator}

\begin{abstract}
We study the numerical radius \(\omega(T)\) of a bounded linear operator \(T\) on a complex Hilbert space. Write \(T=A+iB\), where \(A\) and \(B\) are the real and imaginary parts of \(T\). A central result is the exact formula
\[
\omega(T)=\frac{\pi}{2}\sup_{\|x\|=1}\int_0^1
\left|\left\langle (A\cos \pi t+B\sin \pi t)x,x\right\rangle\right|\,dt .
\]
For every \(n\in\mathbb N\), we also prove the power form
\[
\omega^{2n}(T)=\frac{4^n}{\binom{2n}{n}}
\sup_{\|x\|=1}\int_0^1
\left\langle (A\cos \pi t+B\sin \pi t)x,x\right\rangle^{2n}\,dt .
\]
This yields sharp upper bounds; for \(n=1\), together with the matching integral lower bound, it recovers Kittaneh's classical two-sided inequality
\[
\frac14\|T^*T+TT^*\|\le \omega^2(T)\le \frac12\|T^*T+TT^*\|.
\]
Furthermore, we obtain exact numerical radius formulas for some block operator matrices, including a \(3\times3\) nilpotent matrix and longer alternating paths. The factor \(\pi/2\) is best possible.
\end{abstract}

\maketitle

\pagestyle{myheadings} \markboth{\centerline{W. Audeh, F. Kittaneh \& H. R. Moradi}}
{\centerline{}} \bigskip \bigskip 

\section{Introduction}
Let \(\mathbb{H}\) be a complex Hilbert space, that is, a vector space over
\(\mathbb{C}\) equipped with an inner product \(\langle\cdot,\cdot\rangle\)
that is complete with respect to the norm
\(\|x\|=\sqrt{\langle x,x\rangle}\). We mean by \(\mathbb{B}(\mathbb{H})\)
the \(C^*\)-algebra of all bounded linear operators on \(\mathbb{H}\), where an operator \(T:\mathbb{H}\to\mathbb{H}\) is bounded if there exists a constant \(C>0\) such that \(\|Tx\|\le C\|x\|\) for all \(x\in\mathbb{H}\).
The smallest such constant is the operator norm
\(\|T\|=\sup_{\|x\|=1}\|Tx\|\). We denote the zero operator and the identity operator by \(O\) and \(I\), respectively. For \(T\in\mathbb{B}(\mathbb{H})\),
the numerical radius of \(T\) is represented by
\[
\omega(T)=\sup\{|\langle Tx,x\rangle|:x\in\mathbb{H},\ \|x\|=1\}.
\]
This number calculates the largest possible absolute value of the quadratic form \(\langle Tx,x\rangle\) over unit vectors. It is well known that
\(\omega(\cdot)\) is a norm on \(\mathbb{B}(\mathbb{H})\) and satisfies the
essential equivalence
\begin{equation}\label{intro:basic}
\frac{1}{2}\|T\|\le\omega(T)\le\|T\|.
\end{equation}
These inequalities indicate that the numerical radius is a refinement of the
operator norm and have inspired extensive research aiming to sharpen and generalize them.

A main tool in analyzing the numerical radius is the Cartesian
decomposition. Every \(T\in\mathbb{B}(\mathbb{H})\) can be written uniquely as
\[
T=A+iB,
\]
where
\[
A=\Re T=\frac{T+T^*}{2},\qquad B=\Im T=\frac{T-T^*}{2i}.
\]
The operators \(A\) and \(B\) are self-adjoint, meaning \(A=A^*\) and
\(B=B^*\). For a self-adjoint operator \(S\), the numerical radius coincides
with the operator norm: \(\omega(S)=\|S\|\). From the definition of
\(\omega(T)\) one observes immediately that
\[
\|\Re T\|\le\omega(T),\qquad \|\Im T\|\le\omega(T).
\]

Over the years, several refinements of \eqref{intro:basic} have been
obtained. A famous result due to Kittaneh \cite{Kittaneh_Studia_2005}
asserts that
\begin{equation}\label{intro:kittaneh}
\frac{1}{4}\|T^*T+TT^*\|\le\omega^2(T)\le\frac{1}{2}\|T^*T+TT^*\|.
\end{equation}
Related commutator and anticommutator norm inequalities with applications to numerical radius bounds were recently obtained in \cite{SababhehAudehMoradi_MJM_2025}.

Another well known upper bound is provided by \cite{Kittaneh_Studia_2003}
\begin{equation*}
\omega(T)\le\frac{1}{2}\|\,|T|+|T^*|\,\|\le\frac{1}{2}\bigl(\|T\|+\|T^2\|^{1/2}\bigr),
\end{equation*}
where \(|T|=(T^*T)^{1/2}\) is the absolute value of \(T\). For every
\(n\in\mathbb{N}\), the numerical radius also satisfies the power
inequality \(\omega(T^n)\le\omega^n(T)\).

The numerical radius is also intrinsically linked to block operator
matrices, that is, matrices whose entries are bounded linear operators. For instance, the \(2\times2\) block operator matrix
\[
\begin{bmatrix}O&T\\T^*&O\end{bmatrix}
\]
acts on \(\mathbb{H}\oplus\mathbb{H}\) and satisfies the norm identity
\begin{equation*}
\left\|\begin{bmatrix}O&T\\T^*&O\end{bmatrix}\right\|=\|T\|.
\end{equation*}
Off-diagonal block operator matrices of the form
\(\begin{bmatrix}O&A\\B&O\end{bmatrix}\) have proven to be exceptionally
versatile in developing sharp numerical radius bounds. The inner product on
\(\mathbb{H}\oplus\mathbb{H}\) is
\[
\left\langle
\begin{pmatrix}
x_1\\
y_1
\end{pmatrix},
\begin{pmatrix}
x_2\\
y_2
\end{pmatrix}
\right\rangle
=
\langle x_1,x_2\rangle+\langle y_1,y_2\rangle.
\]

Recent work on \(2\times2\) operator matrices includes \cite{Conde_GMJ_2024, HirzallahKittanehShebrawi, HirzallahKittanehShebrawi2}. 
More recent investigations of upper and lower numerical radius bounds for certain operator matrices, including norm and numerical radius inequalities, appear in \cite{AudehMoradiSababheh_JMI_2026, AudehMoradiSababheh_Filomat_2025, Kittaneh_LAMA_2025I, Sababheh_MIA_2024}. For numerical radius inequalities for general \(n\times n\) operator matrices, 
see \cite{Abu-Omar_LAA_2015}.

More recently, considerable research has focused on integral forms of
numerical radius inequalities. These representations provide refined bounds by integrating operator families over continuous parameter ranges. For instance, it was established in \cite{intro:integral} that
\begin{equation}\label{intro:integral}
\omega(A+B)\le2\int_0^1\omega((1-t)A+tB)\,dt\le\omega(A)+\omega(B),
\end{equation}
which refines the triangle inequality for the numerical radius.
Also, integral bounds involving the absolute values $\left| T \right|$ and $\left| {{T}^{*}} \right|$ have been derived, such as \cite{Moradi_LAMA_2021,Sababheh_LAMA_2021}
\begin{equation*}
\omega^2(T)\le\left\|\int_0^1\bigl(t|T|+(1-t)|T^*|\bigr)^2dt\right\|
\le\frac{1}{2}\|\,|T|^2+|T^*|^2\,\|.
\end{equation*}
For related H\"older-type numerical radius inequalities and further refinements involving
\(|T|\) and \(|T^*|\), see \cite{AudehMoradiSababheh_LAA_2024}. For sharp numerical radius inequalities associated with the Cartesian decomposition, see also \cite{Kittaneh_Cartesian_2015}.

In this paper, we extend and refine numerical radius inequalities by synthesizing integral techniques, block operator matrix transformations, and operator norm estimates. We establish three main lines of results. First, we study a \(3\times 3\) nilpotent block operator matrix and a related family of \(2\times 2\) off-diagonal matrices, obtaining exact formulas for their numerical radii. We show that the square of the numerical radius of the \(3\times 3\) matrix is exactly twice the square of the numerical radius of the \(2\times 2\) matrix. Second, we extend this to longer alternating paths. For \(m\ge 2\), let \(\mathbb{T}_m\) denote the \(m\times m\) block operator matrix with zero diagonal and alternating first superdiagonal entries \(T,T^*,T,T^*,\ldots\), that is,
\[
(\mathbb{T}_m)_{j,j+1}=
\begin{cases}
T, & j \text{ odd},\\
T^*, & j \text{ even},
\end{cases}
\qquad 1\le j\le m-1,
\]
with all other blocks equal to \(O\); in particular, \(\mathbb{T}_3\) is the \(3\times 3\) matrix mentioned above. We compute the numerical radius for every path length \(m\), obtaining the formula
\[
\omega(\mathbb{T}_m)=\|T\|\cos\frac{\pi}{m+1}.
\]
 Third, we use the Cartesian decomposition to show exact integral representations for
\(\omega(T)\) and \(\omega^{2n}(T)\). The central analytic result is the exact formula
\[
\omega(T)=\frac{\pi}{2}\sup_{\|x\|=1}\int_0^1
\left|\left\langle(A\cos\pi t+B\sin\pi t)x,x\right\rangle\right|dt.
\]
We also establish the power form
\[
\omega^{2n}(T)=\frac{4^n}{\binom{2n}{n}}
\sup_{\|x\|=1}\int_0^1
\left\langle(A\cos\pi t+B\sin\pi t)x,x\right\rangle^{2n}dt.
\]
As a result, we obtain the sharp upper bound
\[
\omega^{2n}(T)\le\frac{4^n}{\binom{2n}{n}}
\left\|\int_0^1(A\cos\pi t+B\sin\pi t)^{2n}dt\right\|.
\]
For \(n=1\), this gives the upper half of Kittaneh's classical inequality; the matching lower bound follows from the integral sandwich, recovering the full two-sided inequality \eqref{intro:kittaneh}. The factor \(\pi/2\) in the first-order
representation is best possible, as shown by the identity operator \(T=I\).
The max-phase representation \(\omega(T)=\max_{\theta\in[0,2\pi)}\|\Re(e^{i\theta}T)\|\) is closely
connected to the results in \cite{Kittaneh_Moslehian_Yamazaki}.

Our proofs use integral averaging, the Cartesian decomposition, block
operator matrices, and the spectral theorem for self-adjoint operators. For a self-adjoint operator \(S\), the spectral theorem gives a spectral measure \(E\) such that \(S=\int s\,dE_s\). For a unit vector \(x\), the measure
\(\mu_x(\cdot)=\langle E(\cdot)x,x\rangle\) is a probability measure, and
Jensen's inequality for a convex function \(f\) reads
\[
f\!\left(\int s\,d\mu_x(s)\right)\le\int f(s)\,d\mu_x(s).
\]
The integrals occurring in this paper are Bochner integrals, that is, the
extension of the Lebesgue integral to functions taking values in a Banach
space; for a continuous operator-valued function \(t\mapsto R(t)\) on
\([0,1]\), the integral \(\int_0^1R(t)\,dt\) is a well-defined bounded linear
operator. In the study of block matrices, we also use vector states and
normal states. A vector state on \(\mathbb{B}(\mathbb{H})\) is a functional
of the form \(\psi_x(S)=\langle Sx,x\rangle\) for \(S\in\mathbb{B}(\mathbb{H})\),
where \(x\in\mathbb{H}\) is a unit vector. A normal state is a positive
linear functional \(\varphi\) on \(\mathbb{B}(\mathbb{H})\) with
\(\|\varphi\|=1\) that is \(\sigma\)-weakly continuous; every normal state
can be written as \(\varphi(S)=\sum_j\lambda_j\langle Sx_j,x_j\rangle\),
where \(\{x_j\}\) is an orthonormal sequence and \(\lambda_j\ge0\) with
\(\sum_j\lambda_j=1\). Finally, in Corollary~\ref{cor:alternating-path} we
use the \(m\times m\) tridiagonal matrix \(C_m\) with zero diagonal and
\(1/2\) on the sub- and superdiagonals; its largest eigenvalue is
\(\cos\frac{\pi}{m+1}\).

The results significantly extend the scope of existing numerical radius
theory and provide new tools for estimating \(\omega(T)\) in terms of
integral operators.

\section{Exact Numerical Radii and Integral Representations}

In this section, we give our main results. We start with an exact formula for a block matrix. Then, we study a special \(3\times3\) matrix and a family of \(2\times2\) matrices. Next, we establish an integral identity for
trigonometric polynomials. Finally, we utilize the Cartesian decomposition to give integral representations and bounds for the numerical radius.

Off-diagonal block matrices are important in numerical radius problems. The next lemma provides an exact formula for the numerical radius of a general off-diagonal \(2\times2\) block matrix. This formula changes the problem into a supremum over unit vectors. It also illustrates that the special self-adjoint case gives the operator norm.
\begin{lemma}\label{lem:offdiag-exact}
Let \(A,B\in\mathbb{B}(\mathbb{H})\). Then
\begin{equation}\label{eq:offdiag-exact}
\omega\!\left(\begin{bmatrix}O&A\\B&O\end{bmatrix}\right)
=
\frac{1}{2}\,
\sup_{\|u\|=\|v\|=1}
\bigl|\langle Av,u\rangle+\langle Bu,v\rangle\bigr| .
\end{equation}
In particular, if \(B=A^{*}\), then
\begin{equation}\label{eq:offdiag-selfadjoint}
\omega\!\left(\begin{bmatrix}O&A\\A^{*}&O\end{bmatrix}\right)=\|A\| .
\end{equation}
\end{lemma}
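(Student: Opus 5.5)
We will compute the quadratic form of the block matrix directly and then optimize over how the norm of a unit vector is split between its two components. Write \(\mathbf T=\begin{bmatrix}O&A\\B&O\end{bmatrix}\). For a unit vector \(z=(x,y)\in\mathbb{H}\oplus\mathbb{H}\) we have
\[
\langle \mathbf T z,z\rangle=\langle Ay,x\rangle+\langle Bx,y\rangle .
\]
If either \(x=0\) or \(y=0\), this quantity vanishes. Otherwise write \(x=\cos\alpha\,u\) and \(y=\sin\alpha\,v\) with \(\|u\|=\|v\|=1\) and \(\alpha\in(0,\pi/2)\). This gives
\[
\langle \mathbf T z,z\rangle=\sin\alpha\cos\alpha\bigl(\langle Av,u\rangle+\langle Bu,v\rangle\bigr)=\tfrac12\sin 2\alpha\,\bigl(\langle Av,u\rangle+\langle Bu,v\rangle\bigr).
\]

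\textbf{Upper bound.} Since \(\sin 2\alpha\le 1\), we get \(|\langle \mathbf T z,z\rangle|\le\frac12\sup_{\|u\|=\|v\|=1}|\langle Av,u\rangle+\langle Bu,v\rangle|\).

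\textbf{Lower bound.} Conversely, for any unit vectors \(u,v\), take \(\alpha=\pi/4\), that is, \(z=\frac1{\sqrt2}(u,v)\). Then \(|\langle \mathbf T z,z\rangle|=\frac12|\langle Av,u\rangle+\langle Bu,v\rangle|\). Taking the supremum over \(z\) on one side and over \(u,v\) on the other proves \eqref{eq:offdiag-exact}.

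\textbf{The case \(B=A^*\).} Here \(\langle A^*u,v\rangle=\overline{\langle Av,u\rangle}\), so the bracket equals \(2\Re\langle Av,u\rangle\), and the right side of \eqref{eq:offdiag-exact} becomes \(\sup_{\|u\|=\|v\|=1}|\Re\langle Av,u\rangle|\).
\begin{itemize}
\item Since \(|\Re\langle Av,u\rangle|\le|\langle Av,u\rangle|\le\|A\|\), the supremum is at most \(\|A\|\).
\item For the reverse inequality, fix \(v\) with \(Av\neq0\) and choose \(u=Av/\|Av\|\). Then \(\langle Av,u\rangle=\|Av\|\) is real and positive. Taking the supremum over \(v\) yields \(\|A\|\).
\item If \(A=O\), both sides are zero.
\end{itemize}
Alternatively, one can note that the block matrix is self-adjoint, so its numerical radius equals its norm, which is \(\|A\|\) by the norm identity recalled in the introduction.

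There is no real obstacle here. The only care needed is in the degenerate cases where one component of \(z\) is zero, or where \(A=O\) so that no normalized \(u\) can be built from \(Av\); both are handled by the remarks above.
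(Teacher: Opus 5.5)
Your proof is correct and matches the paper's argument essentially step for step: the polar split $x=\cos\alpha\,u$, $y=\sin\alpha\,v$, the bound $\sin\alpha\cos\alpha\le\tfrac12$ with equality at $\alpha=\pi/4$, and, for $B=A^{*}$, the reduction to $|\mathrm{Re}\,\langle Av,u\rangle|$ followed by the choice $u=Av/\|Av\|$. The only additions are your explicit treatment of the degenerate cases ($x=0$ or $y=0$, and $A=O$), which the paper leaves implicit, and the alternative proof of the $B=A^{*}$ case via self-adjointness of the block matrix together with the norm identity from the introduction; both are valid.
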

\begin{proof}
Let \(M=\begin{bmatrix}O&A\\B&O\end{bmatrix}\) and let $z=\begin{pmatrix}x\\y\end{pmatrix}\in\mathbb H\oplus\mathbb H$ with \(\|x\|^{2}+\|y\|^{2}=1\). Then
\[
Mz=
\begin{pmatrix}
Ay\\
Bx
\end{pmatrix},
\qquad
\langle Mz,z\rangle
=
\langle Ay,x\rangle+\langle Bx,y\rangle .
\]
Writing \(x=\cos\alpha\,u\), \(y=\sin\alpha\,v\) with
\(\|u\|=\|v\|=1\) and \(\alpha\in[0,\pi/2]\), we obtain
\[
\langle Mz,z\rangle
=
\sin\alpha\cos\alpha\bigl(\langle Av,u\rangle+\langle Bu,v\rangle\bigr).
\]
Since \(\sin\alpha\cos\alpha\le\tfrac12\) with equality at
\(\alpha=\pi/4\), taking the supremum over \(\alpha,u,v\) yields
\eqref{eq:offdiag-exact}.

For the second statement, take \(B=A^{*}\). Then
\[
\langle Bu,v\rangle=\langle A^{*}u,v\rangle=\langle u,Av\rangle
=\overline{\langle Av,u\rangle},
\]
so
\[
|\langle Av,u\rangle+\langle Bu,v\rangle|
=
2\,|\mathrm{Re}\,\langle Av,u\rangle| .
\]
The supremum of \(|\mathrm{Re}\,\langle Av,u\rangle|\) over unit \(u,v\)
equals \(\|A\|\), because for any unit \(v\) one may choose
\(u=Av/\|Av\|\) when \(Av\neq0\), and the reverse inequality
\(|\mathrm{Re}\,\langle Av,u\rangle|\le\|A\|\) is immediate. Substituting
into \eqref{eq:offdiag-exact} shows \eqref{eq:offdiag-selfadjoint}.
\end{proof}

The following theorem studies a \(3\times3\) nilpotent block matrix and a related family of \(2\times2\) off-diagonal matrices. It shows exact formulas for their numerical radii. It also shows that the square of the \(3\times3\) numerical radius equals twice the square of the \(2\times2\) numerical radius. This provides an exact identity, not just an inequality.
\begin{theorem}\label{thm:deep_3x3_operator_matrix}
Let \(T\in\mathbb{B}(\mathbb{H})\) with \(T\neq O\). Define the \(3\times3\)
block operator matrix
\[
\mathbb T=\begin{bmatrix}O&T&O\\O&O&T^{*}\\O&O&O\end{bmatrix}
\in\mathbb{B}(\mathbb{H}\oplus\mathbb{H}\oplus\mathbb{H}),
\]
and, for \(t\in[0,1]\), the \(2\times2\) off-diagonal operator matrix
\[
M_{t}=\begin{bmatrix}O&tT\\(1-t)T^{*}&O\end{bmatrix}
\in\mathbb{B}(\mathbb{H}\oplus\mathbb{H}).
\]
Then the following assertions hold.
\begin{enumerate}
\item[(i)] \(\displaystyle
\omega(\mathbb T)=\frac{1}{\sqrt2}\,\|T\|\).
\item[(ii)] For every \(t\in[0,1]\),
\[
\omega(M_{t})
=
\frac12\,
\sup_{\|u\|=\|v\|=1}
\bigl|t\,\langle Tv,u\rangle+(1-t)\,\overline{\langle Tv,u\rangle}\bigr|
=\frac{\|T\|}{2}.
\]
In particular,
\(\displaystyle \sup_{t\in[0,1]}\omega(M_{t})=\frac{\|T\|}{2}\).
\item[(iii)] For every \(t\in[0,1]\),
\[
\omega^{2}(\mathbb T)=2\,\omega^{2}(M_{t}).
\]
\item[(iv)] For every continuous weight
\(f\colon[0,1]\to[0,\infty)\) with \(\int_{0}^{1}f(t)\,dt=1\),
\[
\omega^{2}(\mathbb T)=2\int_{0}^{1}f(t)\,\omega^{2}(M_{t})\,dt.
\]
\end{enumerate}
\end{theorem}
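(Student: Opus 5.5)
For (i), I will compute $\langle \mathbb T z,z\rangle$ directly for $z=(x,y,w)$ with $\|x\|^2+\|y\|^2+\|w\|^2=1$:
\[
\langle \mathbb T z,z\rangle=\langle Ty,x\rangle+\langle T^*w,y\rangle=\langle Ty,x\rangle+\overline{\langle Ty,w\rangle}.
\]
For the upper bound I estimate
\[
|\langle \mathbb T z,z\rangle|\le \|Ty\|\,(\|x\|+\|w\|)\le \|T\|\,\|y\|\sqrt{2}\,\sqrt{\|x\|^2+\|w\|^2}.
\]
Since $\|y\|^2+(\|x\|^2+\|w\|^2)=1$, the product $\|y\|\sqrt{1-\|y\|^2}$ is at most $\tfrac12$. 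This gives $\omega(\mathbb T)\le \|T\|/\sqrt2$.

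For the lower bound I use a maximizing sequence. Take unit vectors $v_k$ with $\|Tv_k\|\to\|T\|$ and set $u_k=Tv_k/\|Tv_k\|$ (possible since $T\ne O$). Then put $y=v_k/\sqrt2$ and $x=w=u_k/2$. Both terms become real and equal to $\|Tv_k\|/(2\sqrt2)$, so their sum tends to $\|T\|/\sqrt2$. (An alternative route would be to note that $\mathbb T$ is unitarily similar to a weighted shift built from the polar decomposition and to use $C_3$, whose top eigenvalue is $\cos(\pi/4)=1/\sqrt2$. The direct estimate is simpler, so I will use it.)

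For (ii), the first equality is Lemma~\ref{lem:offdiag-exact} with $A=tT$ and $B=(1-t)T^*$, because $\langle (1-t)T^*u,v\rangle=(1-t)\overline{\langle Tv,u\rangle}$. Write $z=\langle Tv,u\rangle=re^{i\phi}$. Then
\[
|tz+(1-t)\bar z|=r\,|t e^{i\phi}+(1-t)e^{-i\phi}|\le r\le\|T\|.
\]
The key point is that we may choose the phase freely. For fixed $v$, replacing $u$ by a unimodular multiple of $Tv/\|Tv\|$ makes $z$ real and equal to $\|Tv\|$, so that the expression equals $\|Tv\|$. Taking the supremum over $v$ gives $\|T\|$, and hence $\omega(M_t)=\|T\|/2$ for every $t$. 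The statement about $\sup_t\omega(M_t)$ then follows immediately.

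Parts (iii) and (iv) follow by arithmetic. Indeed, $\omega^2(\mathbb T)=\|T\|^2/2=2\cdot\|T\|^2/4=2\,\omega^2(M_t)$ for every $t$, and integrating this constant against a probability density $f$ gives (iv).

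The only delicate step is the sharp upper bound in (i). One must pair the two terms through the common vector $y$ and apply Cauchy--Schwarz to $\|x\|+\|w\|\le\sqrt2\,(\|x\|^2+\|w\|^2)^{1/2}$, rather than bounding each term separately, which would only give $\|T\|$. I will also check the endpoint cases $t=0,1$ in (ii), where $M_t$ is a single off-diagonal block. The phase argument still gives $\|T\|/2$ there, so these cases need no separate treatment.
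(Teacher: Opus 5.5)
Your proposal is correct and follows the paper's proof essentially step for step. For the upper bound in (i), you both bound the two terms jointly through $\|Ty\|$, use $\|x\|+\|w\|\le\sqrt{2}\,(\|x\|^2+\|w\|^2)^{1/2}$, and finish with $\|y\|^2(1-\|y\|^2)\le\frac14$; for the lower bound you use the same test vector $y=e/\sqrt2$, $x=w=Te/(2\|Te\|)$. Part (ii) is Lemma~\ref{lem:offdiag-exact} with the choice $u=Tv/\|Tv\|$, and (iii) and (iv) are direct arithmetic, exactly as in the paper.
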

\begin{proof}
Throughout, \(\mathbb H\oplus\mathbb H\oplus\mathbb H\) is equipped with the usual inner product $
\left\langle
\begin{pmatrix}
x_1\\
y_1\\
z_1
\end{pmatrix},
\begin{pmatrix}
x_2\\
y_2\\
z_2
\end{pmatrix}
\right\rangle
=
\langle x_1,x_2\rangle+\langle y_1,y_2\rangle+\langle z_1,z_2\rangle$.

\medskip
\textbf{(i)}
Let $
w=
\begin{pmatrix}
x\\
y\\
z
\end{pmatrix}
\in\mathbb H\oplus\mathbb H\oplus\mathbb H $ with
\(\|x\|^{2}+\|y\|^{2}+\|z\|^{2}=1\). A direct calculation gives
\[
\mathbb T
\begin{pmatrix}
x\\
y\\
z
\end{pmatrix}
=
\begin{pmatrix}
Ty\\
T^*z\\
O
\end{pmatrix},
\]
and therefore
\[
\left\langle
\mathbb T
\begin{pmatrix}
x\\
y\\
z
\end{pmatrix},
\begin{pmatrix}
x\\
y\\
z
\end{pmatrix}
\right\rangle
=
\langle Ty,x\rangle+\langle T^*z,y\rangle
=
\langle Ty,x\rangle+\langle z,Ty\rangle ,
\]
where the second equality uses the definition of the adjoint. By the triangle inequality and the Cauchy-Schwarz inequality, we have
\begin{equation}\label{eq:cs-3x3}
|\langle\mathbb T w,w\rangle|
\le
\|Ty\|\,\|x\|+\|z\|\,\|Ty\|
=
\|Ty\|\bigl(\|x\|+\|z\|\bigr)
\le
\|T\|\,\|y\|\,\bigl(\|x\|+\|z\|\bigr).
\end{equation}
Since
\[
\bigl(\|x\|+\|z\|\bigr)^{2}
\le
2\bigl(\|x\|^{2}+\|z\|^{2}\bigr)
=
2\bigl(1-\|y\|^{2}\bigr),
\]
we get
\[
\|y\|\bigl(\|x\|+\|z\|\bigr)
\le
\|y\|\sqrt{2(1-\|y\|^{2})}
=
\sqrt{2}\,\sqrt{\|y\|^{2}\bigl(1-\|y\|^{2}\bigr)}
\le
\sqrt{2}\cdot\frac12
=
\frac{1}{\sqrt2},
\]
where the last inequality uses \(s(1-s)\le\tfrac14\) for
\(s=\|y\|^{2}\in[0,1]\). Substituting into \eqref{eq:cs-3x3} and taking the
supremum over all unit \(w\),
\begin{equation}\label{eq:upper-3x3}
\omega(\mathbb T)\le\frac{\|T\|}{\sqrt2}.
\end{equation}

For the reverse inequality, let \(\varepsilon>0\) and choose a unit vector
\(e\in\mathbb{H}\) with \(\|Te\|>\|T\|-\varepsilon\). Set
\[
y=\frac{e}{\sqrt2},\qquad
x=z=\frac{Te}{2\,\|Te\|},
\]
so that
\[
\|x\|^{2}+\|y\|^{2}+\|z\|^{2}
=
\frac14+\frac12+\frac14
=
1.
\]
Then

{\small
\[
\left\langle
\mathbb T
\begin{pmatrix}
x\\
y\\
z
\end{pmatrix},
\begin{pmatrix}
x\\
y\\
z
\end{pmatrix}
\right\rangle=
\Bigl\langle T\frac{e}{\sqrt2},\,\frac{Te}{2\|Te\|}\Bigr\rangle
+
\Bigl\langle \frac{Te}{2\|Te\|},\,T\frac{e}{\sqrt2}\Bigr\rangle
=
\frac{\|Te\|^{2}}{2\sqrt2\,\|Te\|}
+
\frac{\|Te\|^{2}}{2\sqrt2\,\|Te\|}
=
\frac{\|Te\|}{\sqrt2}
>
\frac{\|T\|-\varepsilon}{\sqrt2}.
\]}
Therefore, \(\omega(\mathbb T)\ge(\|T\|-\varepsilon)/\sqrt2\). Letting
\(\varepsilon\to0^{+}\) yields \(\omega(\mathbb T)\ge\|T\|/\sqrt2\),
which together with \eqref{eq:upper-3x3} gives
\(\omega(\mathbb T)=\|T\|/\sqrt2\). This confirms (i).

\medskip
\textbf{(ii)}
Fix \(t\in[0,1]\). Applying Lemma \ref{lem:offdiag-exact} with \(A=tT\) and
\(B=(1-t)T^{*}\), we obtain
\begin{equation*}
\omega(M_{t})
=
\frac12\,
\sup_{\|u\|=\|v\|=1}
\bigl|t\,\langle Tv,u\rangle+(1-t)\,\langle T^{*}u,v\rangle\bigr| .
\end{equation*}
Since
\[
\langle T^{*}u,v\rangle=\langle u,Tv\rangle=\overline{\langle Tv,u\rangle},
\]
the expression inside the supremum becomes
\[
t\,w+(1-t)\,\overline{w},
\qquad
w:=\langle Tv,u\rangle .
\]
For every \(w\in\mathbb{C}\),
\begin{equation*}
\bigl|t\,w+(1-t)\,\overline{w}\bigr|^{2}
=
\bigl(t^{2}+(1-t)^{2}\bigr)|w|^{2}
+
2t(1-t)\,\mathrm{Re}(w^{2}).
\end{equation*}
Writing \(w=|w|e^{i\varphi}\), we have
{\small
\[
\bigl|t\,w+(1-t)\,\overline{w}\bigr|^{2}
=
|w|^{2}\Bigl(\cos^{2}\varphi+(2t-1)^{2}\sin^{2}\varphi\Bigr)
\le
|w|^{2},
\]}
because \(|2t-1|\le1\) for \(t\in[0,1]\). Taking the supremum over
\(\|u\|=\|v\|=1\) and using
\(\sup_{\|u\|=\|v\|=1}|w|=\sup_{\|v\|=1}\|Tv\|=\|T\|\), we receive
\[
\omega(M_{t})
\le
\frac12\,\|T\| .
\]
For the reverse inequality, given any unit \(v\) with \(Tv\neq0\), choose
\(u=Tv/\|Tv\|\). Then \(w=\langle Tv,u\rangle=\|Tv\|\) is real and positive,
so \(t\,w+(1-t)\,\overline{w}=w=\|Tv\|\). Thus,
\[
\omega(M_{t})
\ge
\frac12\,\sup_{\|v\|=1}\|Tv\|
=
\frac{\|T\|}{2}.
\]
Therefore, \(\omega(M_{t})=\|T\|/2\) for every \(t\in[0,1]\), and consequently
\(\sup_{t\in[0,1]}\omega(M_{t})=\|T\|/2\). This confirms (ii).

\medskip
\textbf{(iii)}
From (i), we have
\begin{equation*}
\omega^{2}(\mathbb T)=\frac{\|T\|^{2}}{2}.
\end{equation*}
From (ii), \(\omega(M_{t})=\|T\|/2\) for every \(t\in[0,1]\); accordingly,
\[
2\,\omega^{2}(M_{t})
=
2\Bigl(\frac{\|T\|}{2}\Bigr)^{2}
=
\frac{\|T\|^{2}}{2}
=
\omega^{2}(\mathbb T).
\]
This proves (iii).

\medskip
\textbf{(iv)}
Let \(f\colon[0,1]\to[0,\infty)\) be continuous with
\(\int_{0}^{1}f(t)\,dt=1\). Multiplying the identity in (iii) by \(f(t)\)
and integrating over \(t\in[0,1]\), we get
\[
\omega^{2}(\mathbb T)
=
2\int_{0}^{1}f(t)\,\omega^{2}(M_{t})\,dt,
\]
where we utilized the normalisation \(\int_{0}^{1}f(t)\,dt=1\). This demonstrates (iv).
\end{proof}

\begin{remark}\label{rem:3x3-strengthened}
Several features of Theorem \ref{thm:deep_3x3_operator_matrix} deserve
emphasis.
\begin{enumerate}
\item[(i)] \emph{Exact identity, not a gap.} The identity
\(\omega^{2}(\mathbb T)=2\,\omega^{2}(M_{t})\) in (iii) is an exact equality for every \(t\in[0,1]\), not merely an inequality. In particular, the defect
\(D(t):=\|T\|/2-\omega(M_{t})\) vanishes identically, so the “non-commutativity gap” noted in earlier formulations is zero in this setting.

\item[(ii)] \emph{The defect vanishes identically.} In fact, for every
\(t\in[0,1]\), the numerical radius of \(M_{t}\) is exactly \(\|T\|/2\).
Indeed, applying Lemma \ref{lem:offdiag-exact} with \(A=tT\) and
\(B=(1-t)T^{*}\) gives
\[
\omega(M_{t})
=
\frac12\sup_{\|u\|=\|v\|=1}
\bigl|t\,\langle Tv,u\rangle+(1-t)\,\overline{\langle Tv,u\rangle}\bigr| .
\]
For any \(w\in\mathbb{C}\) and \(t\in[0,1]\), the elementary inequality
\(|t\,w+(1-t)\,\overline{w}|\le|w|\) holds, with equality whenever \(w\) is
real. Since
\(\sup_{\|u\|=\|v\|=1}|\langle Tv,u\rangle|=\|T\|\) and the supremum is
attained (or approached) with \(w\) real by choosing \(u\) parallel to
\(Tv\), we get \(\omega(M_{t})=\|T\|/2\) for every \(t\in[0,1]\).
Hence, \(D(t)=0\) identically, and (iii) is an equality for every
\(t\):
\[
\omega^{2}(\mathbb T)=2\,\omega^{2}(M_{t})
\qquad (t\in[0,1]).
\]

\item[(iii)] \emph{Sharpness for scalar multiples of the identity.} If
\(T=\lambda I\) with \(\lambda\neq0\), then \(\|T\|=|\lambda|\), and the
above computation gives \(\omega(M_{t})=|\lambda|/2\) for every \(t\).
Hence, (iv) is an equality for every weight \(f\), ensuring that the
constant \(2\) in (iv) cannot be improved.
\end{enumerate}
\end{remark}

The \(3\times3\) matrix in Theorem \ref{thm:deep_3x3_operator_matrix} is only one example. The next corollary extends it to longer alternating paths. It computes the numerical radius for every path length \(m\). The answer is the operator norm times a cosine factor.
\begin{corollary}\label{cor:alternating-path}
Let \(m\ge2\) and \(T\in\mathbb B(\mathbb H)\). Define
\[
\mathbb T_m=
\begin{bmatrix}
O&E_1&O&\cdots&O\\
O&O&E_2&\ddots&\vdots\\
\vdots&\ddots&\ddots&\ddots&O\\
O&\cdots&O&O&E_{m-1}\\
O&\cdots&\cdots&O&O
\end{bmatrix},
\qquad
E_j=
\begin{cases}
T,&j\ \text{odd},\\
T^*,&j\ \text{even}.
\end{cases}
\]
Then
\begin{equation}\label{eq:alternating-path-radius}
\omega(\mathbb T_m)
=
\|T\|\cos\frac{\pi}{m+1}.
\end{equation}
For \(m=2\) and \(m=3\), this gives respectively
\[
\omega\!\left(\begin{bmatrix}O&T\\O&O\end{bmatrix}\right)
=\frac12\|T\|,
\qquad
\omega\!\left(\begin{bmatrix}
O&T&O\\
O&O&T^*\\
O&O&O
\end{bmatrix}\right)
=\frac1{\sqrt2}\|T\|.
\]
Thus, the \(3\times3\) identity in Theorem
\ref{thm:deep_3x3_operator_matrix} is the \(m=3\) member of an
entire path-matrix family.
\end{corollary}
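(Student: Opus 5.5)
The plan is to reduce the numerical radius of \(\mathbb T_m\) to the largest eigenvalue of the scalar tridiagonal matrix \(C_m\) mentioned in the introduction. We prove a matching upper bound by Cauchy--Schwarz and a lower bound by an explicit test vector, in the spirit of the proof of Theorem~\ref{thm:deep_3x3_operator_matrix}(i). The case \(T=O\) is trivial, so we assume \(T\neq O\).

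\emph{Upper bound.} Let \(w=(x_1,\dots,x_m)\in\mathbb H^{\oplus m}\) be a unit vector and set \(a_j=\|x_j\|\), so that \(\sum_j a_j^2=1\). A direct computation gives
\[
\langle \mathbb T_m w,w\rangle=\sum_{j=1}^{m-1}\langle E_j x_{j+1},x_j\rangle .
\]
Each term is bounded in modulus by \(\|E_j\|\,a_ja_{j+1}=\|T\|\,a_ja_{j+1}\), since \(\|T^*\|=\|T\|\). Hence
\[
|\langle \mathbb T_m w,w\rangle|\le \|T\|\sum_{j=1}^{m-1}a_ja_{j+1}=\|T\|\,\langle C_m a,a\rangle,
\qquad a=(a_1,\dots,a_m)^{T}.
\]
Here we used that \(C_m\) has \(1/2\) on both off-diagonals, so \(\langle C_m a,a\rangle=\sum_j a_ja_{j+1}\). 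Since \(C_m\) is real symmetric and \(a\) is a unit vector in \(\mathbb R^m\), the Rayleigh quotient bound gives \(\langle C_m a,a\rangle\le\lambda_{\max}(C_m)=\cos\frac{\pi}{m+1}\). Taking the supremum over \(w\) yields \(\omega(\mathbb T_m)\le\|T\|\cos\frac{\pi}{m+1}\).

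\emph{Lower bound.} Take the Perron eigenvector of \(C_m\), namely
\[
a_j=c\,\sin\frac{j\pi}{m+1},
\]
with \(c>0\) chosen so that \(\sum_j a_j^2=1\). All \(a_j\) are positive and \(\sum_j a_ja_{j+1}=\cos\frac{\pi}{m+1}\). Given \(\varepsilon>0\), choose a unit vector \(e\) with \(\|Te\|>\|T\|-\varepsilon\) and put \(f=Te/\|Te\|\). Define
\[
x_j=a_jf \ \text{ for } j \text{ odd},\qquad x_j=a_je \ \text{ for } j \text{ even}.
\]
Then \(w=(x_1,\dots,x_m)\) is a unit vector. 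We check that every term of the sum is real and positive:
\begin{itemize}
\item for \(j\) odd, \(\langle Tx_{j+1},x_j\rangle=a_ja_{j+1}\langle Te,f\rangle=a_ja_{j+1}\|Te\|\);
\item for \(j\) even, \(\langle T^*x_{j+1},x_j\rangle=a_ja_{j+1}\langle T^*f,e\rangle=a_ja_{j+1}\langle f,Te\rangle=a_ja_{j+1}\|Te\|\).
\end{itemize}
Hence \(\langle\mathbb T_m w,w\rangle=\|Te\|\cos\frac{\pi}{m+1}>(\|T\|-\varepsilon)\cos\frac{\pi}{m+1}\). Letting \(\varepsilon\to0^+\) gives the reverse inequality and therefore \eqref{eq:alternating-path-radius}.

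\emph{Special cases.} The stated values follow from \(\cos\frac{\pi}{3}=\frac12\) and \(\cos\frac{\pi}{4}=\frac{1}{\sqrt2}\). The value for \(m=3\) agrees with Theorem~\ref{thm:deep_3x3_operator_matrix}(i).

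The only step requiring care is the lower-bound construction. The alternation \(T,T^*,T,\dots\) must be matched by alternating the test vectors between \(e\) and \(f\), so that all cross terms are simultaneously real and positive and no cancellation occurs. The eigenvalue fact \(\lambda_{\max}(C_m)=\cos\frac{\pi}{m+1}\), with its positive eigenvector, is taken as known from the introduction.
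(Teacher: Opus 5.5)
Your proof is correct and follows the paper's argument essentially step for step. The upper bound uses Cauchy--Schwarz to reduce to the quadratic form of \(C_m\), and the lower bound uses the sine eigenvector with vector entries alternating between \(Te/\|Te\|\) and \(e\). The only cosmetic difference is that you get \(\sum_j a_ja_{j+1}=\cos\frac{\pi}{m+1}\) directly from the eigenvector relation \(a^TC_ma=\lambda_{\max}(C_m)\|a\|^2\), where the paper invokes an explicit trigonometric sum identity.
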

\begin{proof}
Let $
x=
\begin{pmatrix}
x_1\\
\vdots\\
x_m
\end{pmatrix}
\in\bigoplus_{j=1}^m\mathbb H 
$ with \(\sum_{j=1}^m\|x_j\|^2=1\). Then
\[
\langle\mathbb T_mx,x\rangle
=
\sum_{j=1}^{m-1}\langle E_jx_{j+1},x_j\rangle.
\]
By the Cauchy-Schwarz inequality, we have
\[
|\langle E_jx_{j+1},x_j\rangle|
\le
\|E_jx_{j+1}\|\,\|x_j\|
\le
\|T\|\,\|x_{j+1}\|\,\|x_j\|.
\]
Consequently,
\[
|\langle\mathbb T_mx,x\rangle|
\le
\|T\|\sum_{j=1}^{m-1}\|x_j\|\,\|x_{j+1}\|.
\]
Set \(a_j=\|x_j\|\) and $a=\begin{pmatrix}a_1\\\vdots\\a_m\end{pmatrix}$. Then \(\sum_{j=1}^m a_j^2=1\), and
\[
\sum_{j=1}^{m-1}a_j a_{j+1}
=
a^T C_m a,
\]
where \(C_m\) is the \(m\times m\) tridiagonal matrix with zero diagonal and
\(1/2\) on the sub- and superdiagonals. The largest eigenvalue of \(C_m\) is
\(\cos\frac{\pi}{m+1}\). Therefore,
\[
|\langle\mathbb T_mx,x\rangle|
\le
\|T\|\cos\frac{\pi}{m+1},
\]
and taking the supremum over all unit vectors \(x\) gives
\[
\omega(\mathbb T_m)
\le
\|T\|\cos\frac{\pi}{m+1}.
\]

For the reverse inequality, let \(e\in\mathbb H\) be a unit vector with
\(\|Te\|\) arbitrarily close to \(\|T\|\), and set
\(u=Te/\|Te\|\) when \(Te\ne0\). Define
\[
c_j=\sqrt{\frac{2}{m+1}}\sin\frac{j\pi}{m+1},
\qquad
x_j=
\begin{cases}
c_j u,&j\ \text{odd},\\
c_j e,&j\ \text{even}.
\end{cases}
\]
Then \(\sum_{j=1}^m\|x_j\|^2=\sum_{j=1}^m c_j^2=1\), and for each
\(j=1,\dots,m-1\),
\[
\langle E_jx_{j+1},x_j\rangle
=
c_jc_{j+1}\|Te\|.
\]
Therefore,
\[
\langle\mathbb T_mx,x\rangle
=
\|Te\|\sum_{j=1}^{m-1}c_jc_{j+1}.
\]
Employing the elementary trigonometric identity
\[
\sum_{j=1}^{m-1}
\sin\frac{j\pi}{m+1}\sin\frac{(j+1)\pi}{m+1}
=
\frac{m+1}{2}\cos\frac{\pi}{m+1},
\]
we get
\[
\sum_{j=1}^{m-1}c_jc_{j+1}
=
\frac{2}{m+1}\cdot
\frac{m+1}{2}\cos\frac{\pi}{m+1}
=
\cos\frac{\pi}{m+1}.
\]
Hence,
\[
\langle\mathbb T_mx,x\rangle
=
\|Te\|\cos\frac{\pi}{m+1}.
\]
Letting \(\|Te\|\to\|T\|\) yields
\[
\omega(\mathbb T_m)
\ge
\|T\|\cos\frac{\pi}{m+1}.
\]
Together with the upper bound, this demonstrates
\eqref{eq:alternating-path-radius}.
\end{proof}

\begin{remark}\label{rem:path-structure}
The preceding corollary indicates that the constant \(1/\sqrt2\) in
Theorem \ref{thm:deep_3x3_operator_matrix} is not an isolated phenomenon.
It is the value \(\cos(\pi/4)\) dictated by the spectral radius of the path
graph. In particular,
\[
\omega(\mathbb T_m)
=
\|T\|\cos\frac{\pi}{m+1}
\longrightarrow \|T\|
\qquad(m\to\infty).
\]
Therefore, longer alternating operator paths provide a natural family of
explicit block matrices whose numerical radii interpolate sharply between
the nilpotent \(2\times2\) value \(\|T\|/2\) and the operator norm.
\end{remark}

The subsequent lemma provides a useful integral identity. It computes the integral of an even-degree trigonometric polynomial. This identity is needed for the main integral representation later. We also record the \(n=1\) case.

Throughout, we write \(R(t):=A\cos(\pi t)+B\sin(\pi t)\) for \(t\in[0,1]\).
\begin{lemma}\label{lem:trig-2n}
For every \(n\in\mathbb{N}\) and all \(a,b\in\mathbb{R}\),
\begin{equation*}
\int_{0}^{1}\bigl(a\cos(\pi t)+b\sin(\pi t)\bigr)^{2n}\,dt
=
\frac{\binom{2n}{n}}{4^{n}}\,(a^{2}+b^{2})^{n}.
\end{equation*}
In particular, for \(n=1\),
\begin{equation}\label{eq:trig-2}
\int_{0}^{1}\bigl(a\cos(\pi t)+b\sin(\pi t)\bigr)^{2}\,dt
=
\frac{a^{2}+b^{2}}{2}.
\end{equation}
\end{lemma}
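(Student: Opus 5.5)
The plan is to reduce everything to a single power of the sine. Write \((a,b)=r(\cos\theta,\sin\theta)\) with \(r=\sqrt{a^2+b^2}\) and \(\theta\in\mathbb{R}\). The angle-difference formula gives
\[
a\cos(\pi t)+b\sin(\pi t)=r\cos(\pi t-\theta).
\]
If \(r=0\), both sides vanish, so we may assume \(r>0\). The integrand is then \(r^{2n}\cos^{2n}(\pi t-\theta)\).

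Next, substitute \(s=\pi t-\theta\). This turns the integral into
\[
\frac{r^{2n}}{\pi}\int_{-\theta}^{\pi-\theta}\cos^{2n}s\,ds .
\]
Since \(\cos^{2n}\) is \(\pi\)-periodic (indeed \(\cos^{2n}(s+\pi)=\cos^{2n}s\)), the integral over any interval of length \(\pi\) is the same. Hence it equals \(\int_0^{\pi}\cos^{2n}s\,ds\), and \(\theta\) disappears. This is the key step, and it is what makes the answer depend only on \(a^2+b^2\).

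It then remains to evaluate the Wallis integral
\[
\frac{1}{\pi}\int_0^{\pi}\cos^{2n}s\,ds=\frac{\binom{2n}{n}}{4^n}.
\]
I would do this by writing \(\cos s=(e^{is}+e^{-is})/2\) and expanding \(\cos^{2n}s=4^{-n}\sum_{k=0}^{2n}\binom{2n}{k}e^{i(2n-2k)s}\). Each exponential has an even frequency \(2n-2k\), so every term with \(k\neq n\) integrates to zero over \([0,\pi]\). Only the constant term \(4^{-n}\binom{2n}{n}\) survives, which gives the claim. Alternatively, one can use the reduction formula \(I_{2n}=\frac{2n-1}{2n}I_{2n-2}\) and induct.

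The \(n=1\) case follows because \(\binom21/4=1/2\). The only point that needs care is the periodicity argument: the interval is shifted by \(\theta\), and the period of \(\cos^{2n}\) must be exactly \(\pi\), which matches the interval length \(\pi\) coming from \(t\in[0,1]\). Beyond that there is no real obstacle.
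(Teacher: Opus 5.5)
Your proof is correct, but it takes a different route from the paper. The paper does not rotate. It expands $(a\cos s+b\sin s)^{2n}$ binomially in $a$ and $b$, kills the terms with odd powers of $\cos s$ via the reflection $s\mapsto\pi-s$, and evaluates the surviving mixed moments $\frac{1}{\pi}\int_0^\pi\cos^{2n-2q}s\,\sin^{2q}s\,ds$ by the Wallis formula. It then reassembles $(a^2+b^2)^n$ using the double-factorial identity $\binom{2n}{2q}\frac{(2n-2q-1)!!\,(2q-1)!!}{(2n)!!}=\frac{\binom{2n}{n}}{4^n}\binom{n}{q}$ and the binomial theorem.

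Your phase-shift reduction $a\cos(\pi t)+b\sin(\pi t)=r\cos(\pi t-\theta)$, together with the $\pi$-periodicity of $\cos^{2n}$, collapses all of this to a single Wallis integral, which your Fourier expansion evaluates cleanly. This is shorter, avoids the combinatorial identity, and shows directly why only $a^2+b^2$ matters. The paper itself uses exactly this rotation trick later, in Step 1 of the proof of Theorem~\ref{thm:deep_cstar_state_integral}, for the $L^1$ version.

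What the paper's expansion buys is the individual mixed moments. These are what is needed at the operator level, where $A$ and $B$ need not commute and no rotation reduction is available; compare the constants $3/8$ and $1/8$ in Corollary~\ref{cor:explicit-fourth-order}. Your argument recovers them too: since your identity holds for all real $a,b$, comparing coefficients of $a^{2n-k}b^{k}$ on both sides yields those mixed moments as a by-product.
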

\begin{proof}
Set \(s=\pi t\). Then \(dt=ds/\pi\), and
\[
\int_{0}^{1}\bigl(a\cos(\pi t)+b\sin(\pi t)\bigr)^{2n}\,dt
=
\frac{1}{\pi}\int_{0}^{\pi}(a\cos s+b\sin s)^{2n}\,ds.
\]
Expanding the integrand by the binomial theorem,
\[
(a\cos s+b\sin s)^{2n}
=
\sum_{k=0}^{2n}\binom{2n}{k}a^{2n-k}b^{k}\cos^{2n-k}s\,\sin^{k}s.
\]
For nonnegative integers \(p,q\), the integral
\[
I(p,q):=\int_{0}^{\pi}\cos^{p}s\,\sin^{q}s\,ds
\]
vanishes whenever \(p\) is odd. Indeed, under \(s\mapsto \pi-s\), we have
\(\cos(\pi-s)=-\cos s\) and \(\sin(\pi-s)=\sin s\), so
\[
I(p,q)
=
\int_{0}^{\pi}\cos^{p}(\pi-u)\sin^{q}(\pi-u)\,du
=
(-1)^{p}\int_{0}^{\pi}\cos^{p}u\,\sin^{q}u\,du
=
(-1)^{p}I(p,q),
\]
which forces \(I(p,q)=0\) when \(p\) is odd. Since \(2n\) is even, the exponent \(2n-k\) is odd precisely when \(k\) is odd. Hence, only even \(k\) terms survive in the expansion. Write \(k=2q\) with \(q=0,1,\dots,n\). Thus,
\[
\int_{0}^{1}\bigl(a\cos(\pi t)+b\sin(\pi t)\bigr)^{2n}\,dt
=
\frac{1}{\pi}\sum_{q=0}^{n}\binom{2n}{2q}a^{2n-2q}b^{2q}
\int_{0}^{\pi}\cos^{2n-2q}s\,\sin^{2q}s\,ds.
\]
By the Wallis formula,
\[
\frac{1}{\pi}\int_{0}^{\pi}\cos^{2n-2q}s\,\sin^{2q}s\,ds
=
\frac{(2n-2q-1)!!\,(2q-1)!!}{(2n)!!},
\]
with the convention \((-1)!!=0!!=1\). Thus,
\[
\int_{0}^{1}\bigl(a\cos(\pi t)+b\sin(\pi t)\bigr)^{2n}\,dt
=
\sum_{q=0}^{n}\binom{2n}{2q}
\frac{(2n-2q-1)!!\,(2q-1)!!}{(2n)!!}
a^{2n-2q}b^{2q}.
\]
Using \((2p)!!=2^{p}p!\) and \((2p-1)!!=\frac{(2p)!}{2^{p}p!}\), one checks that
\[
\binom{2n}{2q}
\frac{(2n-2q-1)!!\,(2q-1)!!}{(2n)!!}
=
\frac{\binom{2n}{n}}{4^{n}}\binom{n}{q}.
\]
Hence,
\[
\int_{0}^{1}\bigl(a\cos(\pi t)+b\sin(\pi t)\bigr)^{2n}\,dt
=
\frac{\binom{2n}{n}}{4^{n}}
\sum_{q=0}^{n}\binom{n}{q}a^{2n-2q}b^{2q}
=
\frac{\binom{2n}{n}}{4^{n}}(a^{2}+b^{2})^{n},
\]
by the binomial theorem. The case \(n=1\) follows directly from
\(\binom{2}{1}=2\) and \(4^{1}=4\).
\end{proof}

The next theorem employs the Cartesian decomposition \(T=A+iB\). It shows integral representations for \(\omega(T)\) and \(\omega^{2n}(T)\). It also gives a sharp upper bound in terms of an integral operator. For \(n=1\), it recovers Kittaneh's classical inequality.
\begin{theorem}\label{thm:deep_cstar_state_integral}
Let \(T\in\mathbb{B}(\mathbb{H})\) with Cartesian decomposition \(T=A+iB\). Let \(\mathscr{P}\) denote the set of all vector states on \(\mathbb{B}(\mathbb{H})\),
\[
\mathscr{P}
=
\bigl\{\psi_{x}: \psi_x(S)=\langle Sx,x\rangle,\ S\in\mathbb B(\mathbb H),\ x\in\mathbb{H},\ \|x\|=1\bigr\},
\]
and let \(\mathcal{S}(\mathbb{B}(\mathbb{H}))\) denote the set of all normal state functionals on \(\mathbb{B}(\mathbb{H})\). Then the following assertions hold.
\begin{enumerate}
\item[(i)] \emph{(First-order representation.)}
{\small
\begin{equation}\label{eq:2-6-main}
\omega(T)
=
\frac{\pi}{2}\,
\sup_{\psi\in\mathscr{P}}
\int_{0}^{1}
\bigl|\psi\bigl(A\cos(\pi t)+B\sin(\pi t)\bigr)\bigr|\,dt
=
\frac{\pi}{2}\,
\sup_{\varphi\in\mathcal{S}(\mathbb{B}(\mathbb{H}))}
\int_{0}^{1}
\bigl|\varphi\bigl(A\cos(\pi t)+B\sin(\pi t)\bigr)\bigr|\,dt .
\end{equation}}
Equivalently, for every vector state \(\psi_{x}\) and every normal state
\(\varphi\),
\begin{equation}\label{eq:2-6-lower}
\frac{\pi}{2}
\int_{0}^{1}
\bigl|\psi_{x}\bigl(A\cos(\pi t)+B\sin(\pi t)\bigr)\bigr|\,dt
=
|\langle Tx,x\rangle|
\;\le\;
\omega(T),
\end{equation}
and
\begin{equation}\label{eq:2-6-lower-phi}
\frac{\pi}{2}
\int_{0}^{1}
\bigl|\varphi\bigl(A\cos(\pi t)+B\sin(\pi t)\bigr)\bigr|\,dt
\;\le\;
\omega(T).
\end{equation}

\item[(ii)] \emph{(Second-order representation.)} For every unit vector
\(x\in\mathbb{H}\),
\begin{equation}\label{eq:2-6-second}
|\langle Tx,x\rangle|^{2}
=
2\int_{0}^{1}
\bigl\langle \bigl(A\cos(\pi t)+B\sin(\pi t)\bigr)x,x\bigr\rangle^{2}\,dt .
\end{equation}
Consequently,
\begin{equation}\label{eq:2-6-second-sup}
\omega^{2}(T)
=
2\sup_{\|x\|=1}
\int_{0}^{1}
\bigl\langle \bigl(A\cos(\pi t)+B\sin(\pi t)\bigr)x,x\bigr\rangle^{2}\,dt
=
2\sup_{\psi\in\mathscr{P}}
\int_{0}^{1}\psi\bigl(R(t)\bigr)^{2}\,dt .
\end{equation}

\item[(iii)] \emph{(General \(2n\)-th order representation.)} For every
\(n\in\mathbb{N}\) and every unit vector \(x\in\mathbb{H}\),
\begin{equation}\label{eq:2-6-2n}
|\langle Tx,x\rangle|^{2n}
=
\frac{4^{n}}{\binom{2n}{n}}
\int_{0}^{1}
\bigl\langle \bigl(A\cos(\pi t)+B\sin(\pi t)\bigr)x,x\bigr\rangle^{2n}\,dt .
\end{equation}
Therefore,
\begin{equation}\label{eq:2-6-2n-sup}
\omega^{2n}(T)
=
\frac{4^{n}}{\binom{2n}{n}}
\sup_{\|x\|=1}
\int_{0}^{1}
\bigl\langle \bigl(A\cos(\pi t)+B\sin(\pi t)\bigr)x,x\bigr\rangle^{2n}\,dt .
\end{equation}

\item[(iv)] \emph{(Sharp upper bound and Kittaneh's inequality.)} For every
\(n\in\mathbb{N}\),
\begin{equation}\label{eq:2-6-upper}
\omega^{2n}(T)
\;\le\;
\frac{4^{n}}{\binom{2n}{n}}
\left\|
\int_{0}^{1}
\bigl(A\cos(\pi t)+B\sin(\pi t)\bigr)^{2n}\,dt
\right\| .
\end{equation}
For \(n=1\), inequality \eqref{eq:2-6-upper} reduces exactly to upper half of Kittaneh's inequality
\begin{equation*}
\omega^{2}(T)
\;\le\;
\|A^{2}+B^{2}\|
\;=\;
\frac{1}{2}\|T^{*}T+TT^{*}\| .
\end{equation*}
\end{enumerate}
\end{theorem}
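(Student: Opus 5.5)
The whole statement rests on one scalar reduction. Fix a unit vector \(x\) and put \(a=\langle Ax,x\rangle\) and \(b=\langle Bx,x\rangle\). Both are real because \(A,B\) are self-adjoint, and \(\langle Tx,x\rangle=a+ib\), so \(|\langle Tx,x\rangle|^{2}=a^{2}+b^{2}\). Moreover \(\psi_x(R(t))=a\cos(\pi t)+b\sin(\pi t)\).

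\textbf{Part (i).} Write \(a=r\cos\phi\) and \(b=r\sin\phi\) with \(r=|\langle Tx,x\rangle|\). Then \(\psi_x(R(t))=r\cos(\pi t-\phi)\). Substituting \(s=\pi t\) gives
\[
\int_0^1|\cos(\pi t-\phi)|\,dt=\frac1\pi\int_{-\phi}^{\pi-\phi}|\cos s|\,ds=\frac2\pi,
\]
since \(|\cos|\) is \(\pi\)-periodic. This proves \eqref{eq:2-6-lower}, and taking the supremum over unit \(x\) gives the vector-state half of \eqref{eq:2-6-main}.

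For a normal state \(\varphi=\sum_j\lambda_j\psi_{x_j}\), I will use \(\varphi(R(t))=\sum_j\lambda_j\psi_{x_j}(R(t))\). The series converges uniformly in \(t\), so summation and integration may be interchanged. The triangle inequality then gives
\[
\int_0^1|\varphi(R(t))|\,dt\le\sum_j\lambda_j\int_0^1|\psi_{x_j}(R(t))|\,dt=\frac2\pi\sum_j\lambda_j|\langle Tx_j,x_j\rangle|\le\frac2\pi\,\omega(T),
\]
which is \eqref{eq:2-6-lower-phi}. Since every vector state is normal, the two suprema in \eqref{eq:2-6-main} coincide.

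\textbf{Parts (ii) and (iii).} These follow immediately from Lemma~\ref{lem:trig-2n} applied to \(a,b\):
\[
\int_0^1\psi_x(R(t))^{2n}\,dt=\frac{\binom{2n}{n}}{4^n}(a^2+b^2)^n=\frac{\binom{2n}{n}}{4^n}|\langle Tx,x\rangle|^{2n}.
\]
Taking suprema gives \eqref{eq:2-6-2n-sup}. The case \(n=1\), where \(4/\binom21=2\), gives (ii).

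\textbf{Part (iv).} The key pointwise step is that for self-adjoint \(S\) and unit \(x\),
\[
\langle Sx,x\rangle^{2n}\le\langle S^{2n}x,x\rangle .
\]
This is Jensen's inequality for the convex function \(s\mapsto s^{2n}\) and the spectral probability measure \(\mu_x\), as recalled in the introduction. Apply it with \(S=R(t)\) and integrate in \(t\). Since \(t\mapsto R(t)^{2n}\) is norm-continuous, the Bochner integral commutes with the bounded functional \(\psi_x\), so
\[
\int_0^1\psi_x(R(t))^{2n}\,dt\le\Big\langle\Big(\int_0^1R(t)^{2n}\,dt\Big)x,x\Big\rangle\le\Big\|\int_0^1R(t)^{2n}\,dt\Big\|.
\]
Combining this with \eqref{eq:2-6-2n-sup} yields \eqref{eq:2-6-upper}.

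For \(n=1\), expand \(R(t)^2=A^2\cos^2(\pi t)+B^2\sin^2(\pi t)+(AB+BA)\cos(\pi t)\sin(\pi t)\). The integrals over \([0,1]\) are \(\tfrac12\), \(\tfrac12\) and \(0\) respectively, so \(\int_0^1R(t)^2\,dt=\tfrac12(A^2+B^2)\) and the bound becomes \(\omega^2(T)\le\|A^2+B^2\|\). Finally, a direct computation from \(A=(T+T^*)/2\) and \(B=(T-T^*)/(2i)\) gives \(A^2+B^2=\tfrac12(T^*T+TT^*)\).

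\textbf{Main obstacle.} Nothing here is deep. The points needing care are the Jensen step \(\langle Sx,x\rangle^{2n}\le\langle S^{2n}x,x\rangle\) via the spectral measure, and justifying the interchange of the state with the Bochner integral, and of the sum with the integral for normal states. Both reduce to continuity and boundedness.
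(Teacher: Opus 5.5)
Your proposal is correct and follows the paper's proof essentially step for step. The ingredients match: the polar-form reduction $a=r\cos\phi$, $b=r\sin\phi$ with $\int_0^1|\cos(\pi t-\phi)|\,dt=2/\pi$ for (i); the orthonormal-sequence representation of normal states with a termwise triangle inequality; Lemma~\ref{lem:trig-2n} for (ii)--(iii); and, for (iv), the spectral-measure Jensen inequality followed by the $n=1$ orthogonality computation and the identity $A^2+B^2=\tfrac12(T^*T+TT^*)$. Your justifications of the interchanges (uniform convergence of the state series in $t$, and commuting $\psi_x$ with the Bochner integral) are slightly more explicit than the paper's, but the route is the same.
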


\begin{proof}
Throughout, \(A\) and \(B\) are self-adjoint, and we write
\[
R(t):=A\cos(\pi t)+B\sin(\pi t),\qquad t\in[0,1].
\]
For each \(t\), the operator \(R(t)\) is self-adjoint, since it is a real
linear combination of the self-adjoint operators \(A\) and \(B\). All
integrals below are Bochner integrals of continuous operator-valued functions on the compact interval \([0,1]\), and they converge in the operator norm.

\medskip
\textbf{Step 1 (vector-state identity)}
Fix \(x\in\mathbb{H}\) with \(\|x\|=1\). The scalars
\[
a:=\langle Ax,x\rangle,\qquad b:=\langle Bx,x\rangle
\]
are real, and
\[
\psi_{x}\bigl(R(t)\bigr)=\langle R(t)x,x\rangle=a\cos(\pi t)+b\sin(\pi t).
\]
Write \(a=R\cos\phi\), \(b=R\sin\phi\) with
\(R=\sqrt{a^{2}+b^{2}}\ge0\) and \(\phi\in\mathbb{R}\). Then
\(a\cos(\pi t)+b\sin(\pi t)=R\cos(\pi t-\phi)\), so
\[
\int_{0}^{1}\bigl|a\cos(\pi t)+b\sin(\pi t)\bigr|\,dt
=
R\int_{0}^{1}\bigl|\cos(\pi t-\phi)\bigr|\,dt
=
\frac{R}{\pi}\int_{-\phi}^{\pi-\phi}|\cos s|\,ds .
\]
Since \(|\cos|\) is \(\pi\)-periodic and \(\int_{0}^{\pi}|\cos s|\,ds=2\), the
last integral equals \(2\) for every \(\phi\). Hence
\begin{equation*}
\int_{0}^{1}\bigl|a\cos(\pi t)+b\sin(\pi t)\bigr|\,dt
=
\frac{2}{\pi}\sqrt{a^{2}+b^{2}} .
\end{equation*}
Therefore,
\begin{equation}\label{eq:trig-L1-apply}
\frac{\pi}{2}\int_{0}^{1}\bigl|\psi_{x}\bigl(R(t)\bigr)\bigr|\,dt
=
\sqrt{\langle Ax,x\rangle^{2}+\langle Bx,x\rangle^{2}}
=
|\langle Tx,x\rangle| .
\end{equation}
Taking the supremum over all unit \(x\in\mathbb{H}\),
\begin{equation}\label{eq:vec-sup-2-6}
\sup_{\psi\in\mathscr{P}}
\int_{0}^{1}
\bigl|\psi\bigl(R(t)\bigr)\bigr|\,dt
=
\frac{2}{\pi}\,\omega(T).
\end{equation}
Multiplying by \(\pi/2\) gives the vector-state identity in
\eqref{eq:2-6-main} and the equality in \eqref{eq:2-6-lower}.

\medskip
\textbf{Step 2 (extension to normal states)}
Let \(\varphi\in\mathcal{S}(\mathbb{B}(\mathbb{H}))\). By the standard
representation of normal states on \(\mathbb{B}(\mathbb{H})\), there exist an orthonormal sequence \(\{x_{j}\}_{j}\subset\mathbb{H}\) and scalars
\(\lambda_{j}\ge0\) with \(\sum_{j}\lambda_{j}=1\) such that
\[
\varphi(S)=\sum_{j}\lambda_{j}\langle Sx_{j},x_{j}\rangle,
\qquad S\in\mathbb{B}(\mathbb{H}).
\]
The series converges in the weak-\(*\) topology, and for each fixed
\(t\in[0,1]\) the triangle inequality gives
\[
\bigl|\varphi(R(t))\bigr|
\;\le\;
\sum_{j}\lambda_{j}\,\bigl|\psi_{x_{j}}(R(t))\bigr| .
\]
Integrating over \(t\in[0,1]\) and applying \eqref{eq:trig-L1-apply} termwise,
\[
\int_{0}^{1}\bigl|\varphi(R(t))\bigr|\,dt
\;\le\;
\sum_{j}\lambda_{j}\cdot\frac{2}{\pi}\,|\langle Tx_{j},x_{j}\rangle|
\;\le\;
\frac{2}{\pi}\,\omega(T),
\]
where the last inequality uses \(|\langle Tx_{j},x_{j}\rangle|\le\omega(T)\)
and \(\sum_{j}\lambda_{j}=1\). Taking the supremum over
\(\varphi\in\mathcal{S}(\mathbb{B}(\mathbb{H}))\),
\begin{equation}\label{eq:normal-sup-2-6}
\sup_{\varphi\in\mathcal{S}(\mathbb{B}(\mathbb{H}))}
\int_{0}^{1}\bigl|\varphi(R(t))\bigr|\,dt
\;\le\;
\frac{2}{\pi}\,\omega(T).
\end{equation}
Since vector states are normal states, \eqref{eq:vec-sup-2-6} implies the
reverse inequality, so equality holds in \eqref{eq:normal-sup-2-6}.
Multiplying by \(\pi/2\) provides the normal-state identity in
\eqref{eq:2-6-main} and the inequality \eqref{eq:2-6-lower-phi}. This establishes (i).

\medskip
\textbf{Step 3 (second-order representation)}
Fix a unit vector \(x\in\mathbb{H}\), and set
\(a=\langle Ax,x\rangle\), \(b=\langle Bx,x\rangle\) as before. By
Lemma \ref{lem:trig-2n} with \(n=1\),
\[
\int_{0}^{1}\bigl\langle R(t)x,x\bigr\rangle^{2}\,dt
=
\int_{0}^{1}\bigl(a\cos(\pi t)+b\sin(\pi t)\bigr)^{2}\,dt
=
\frac{a^{2}+b^{2}}{2}.
\]
Since \(a^{2}+b^{2}=|\langle Tx,x\rangle|^{2}\), we obtain
\[
|\langle Tx,x\rangle|^{2}
=
2\int_{0}^{1}\bigl\langle R(t)x,x\bigr\rangle^{2}\,dt ,
\]
which is \eqref{eq:2-6-second}. 

Taking the supremum over all unit \(x\in\mathbb{H}\) and using the definition of \(\omega(T)\),
\[
\omega^{2}(T)
=
\sup_{\|x\|=1}|\langle Tx,x\rangle|^{2}
=
2\sup_{\|x\|=1}\int_{0}^{1}\bigl\langle R(t)x,x\bigr\rangle^{2}\,dt .
\]
Since vector states are precisely the functionals
\(\psi_{x}(S)=\langle Sx,x\rangle\) with \(\|x\|=1\), the second equality
in \eqref{eq:2-6-second-sup} follows. This confirms (ii).

\medskip
\textbf{Step 4 (general \(2n\)-th order representation)}
Fix \(n\in\mathbb{N}\) and a unit vector \(x\in\mathbb{H}\), and keep the
notation \(a=\langle Ax,x\rangle\), \(b=\langle Bx,x\rangle\). By
Lemma \ref{lem:trig-2n},
\[
\int_{0}^{1}\bigl\langle R(t)x,x\bigr\rangle^{2n}\,dt
=
\int_{0}^{1}\bigl(a\cos(\pi t)+b\sin(\pi t)\bigr)^{2n}\,dt
=
\frac{\binom{2n}{n}}{4^{n}}\,(a^{2}+b^{2})^{n}.
\]
Since \(a^{2}+b^{2}=|\langle Tx,x\rangle|^{2}\), we obtain
\[
|\langle Tx,x\rangle|^{2n}
=
\frac{4^{n}}{\binom{2n}{n}}
\int_{0}^{1}\bigl\langle R(t)x,x\bigr\rangle^{2n}\,dt ,
\]
which is \eqref{eq:2-6-2n}. 

Taking the supremum over all unit \(x\in\mathbb{H}\) and using
\(\omega^{2n}(T)=\sup_{\|x\|=1}|\langle Tx,x\rangle|^{2n}\), we obtain
\eqref{eq:2-6-2n-sup}. This proves (iii).

\medskip
\textbf{Step 5 (sharp upper bound and Kittaneh's inequality)}
Fix \(n\in\mathbb{N}\) and a unit vector \(x\in\mathbb{H}\). Since \(R(t)\)
is self-adjoint for every \(t\), the spectral theorem applied to \(R(t)\)
provides the pointwise inequality
\[
\bigl\langle R(t)x,x\bigr\rangle^{2n}
\;\le\;
\bigl\langle R(t)^{2n}x,x\bigr\rangle,
\qquad t\in[0,1],
\]
because \(s\mapsto s^{2n}\) is convex on \(\mathbb{R}\). Indeed, writing
\(R(t)=\int s\,dE_{s}\) for its spectral measure \(E_{s}\) and setting
\(d\mu_{x}(s)=\langle dE_{s}x,x\rangle\), we have
\[
\bigl\langle R(t)x,x\bigr\rangle^{2n}
=
\Bigl(\int s\,d\mu_{x}(s)\Bigr)^{2n}
\;\le\;
\int s^{2n}\,d\mu_{x}(s)
=
\bigl\langle R(t)^{2n}x,x\bigr\rangle
\]
by Jensen's inequality for the probability measure \(\mu_{x}\). Integrating
over \(t\in[0,1]\) and inserting the resulting bound into
\eqref{eq:2-6-2n},
\begin{align*}
|\langle Tx,x\rangle|^{2n}
&=
\frac{4^{n}}{\binom{2n}{n}}
\int_{0}^{1}\bigl\langle R(t)x,x\bigr\rangle^{2n}\,dt \\
&\le
\frac{4^{n}}{\binom{2n}{n}}
\int_{0}^{1}\bigl\langle R(t)^{2n}x,x\bigr\rangle\,dt \\
&=
\frac{4^{n}}{\binom{2n}{n}}
\left\langle
\Bigl(\int_{0}^{1}R(t)^{2n}\,dt\Bigr)x,x
\right\rangle .
\end{align*}
Taking the supremum over all unit \(x\in\mathbb{H}\) and using the fact that
the operator \(\int_{0}^{1}R(t)^{2n}\,dt\) is self-adjoint (hence its norm is
attained as the supremum of its quadratic form over unit vectors),
\[
\omega^{2n}(T)
\;\le\;
\frac{4^{n}}{\binom{2n}{n}}
\left\|
\int_{0}^{1}\bigl(A\cos(\pi t)+B\sin(\pi t)\bigr)^{2n}\,dt
\right\| ,
\]
which is \eqref{eq:2-6-upper}.

We still need to confirm the case \(n=1\). For \(n=1\),
\[
\int_{0}^{1}R(t)^{2}\,dt
=
\int_{0}^{1}\bigl(A\cos(\pi t)+B\sin(\pi t)\bigr)^{2}\,dt .
\]
Expanding the square and utilizing the orthogonality relations
\(\int_{0}^{1}\cos^{2}(\pi t)\,dt=\int_{0}^{1}\sin^{2}(\pi t)\,dt=\tfrac12\)
and \(\int_{0}^{1}\cos(\pi t)\sin(\pi t)\,dt=0\), we obtain
\[
\int_{0}^{1}R(t)^{2}\,dt
=
\frac{A^{2}+B^{2}}{2}.
\]
Substituting into \eqref{eq:2-6-upper} with \(n=1\) and using
\(\binom{2}{1}=2\), \(4^{1}=4\),
\[
\omega^{2}(T)
\;\le\;
\frac{4}{2}\left\|\frac{A^{2}+B^{2}}{2}\right\|
=
\|A^{2}+B^{2}\| .
\]
Finally, the elementary operator identity
\[
T^{*}T+TT^{*}
=
(A-iB)(A+iB)+(A+iB)(A-iB)
=
2(A^{2}+B^{2})
\]
gives \(\|A^{2}+B^{2}\|=\tfrac12\|T^{*}T+TT^{*}\|\), so
\[
\omega^{2}(T)\le\|A^{2}+B^{2}\|=\frac{1}{2}\|T^{*}T+TT^{*}\|,
\]
which is Kittaneh's classical inequality. This confirms (iv) and completes the proof.
\end{proof}

\begin{corollary}\label{cor:sharpened-integral-sandwich}
Let \(T\in\mathbb B(\mathbb H)\), and let \(A=\Re T\), \(B=\Im T\). For every \(n\in\mathbb N\), put
\[
R(t)=A\cos(\pi t)+B\sin(\pi t),\qquad t\in[0,1].
\]
Then
\begin{equation}\label{eq:sharpened-integral-sandwich}
\frac{\binom{2n}{n}}{4^n}\,\omega^{2n}(T)
\le
\left\|\int_0^1 R(t)^{2n}\,dt\right\|
\le
\omega^{2n}(T).
\end{equation}
In particular, for \(n=1\),
\begin{equation}\label{eq:kittaneh-both-sides}
\frac14\|T^*T+TT^*\|
\le
\omega^2(T)
\le
\frac12\|T^*T+TT^*\|.
\end{equation}
\end{corollary}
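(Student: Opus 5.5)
The lower inequality in \eqref{eq:sharpened-integral-sandwich} needs no new work: multiplying \eqref{eq:2-6-upper} of Theorem~\ref{thm:deep_cstar_state_integral}(iv) by \(\binom{2n}{n}/4^n\) gives exactly
\[
\frac{\binom{2n}{n}}{4^n}\,\omega^{2n}(T)\le\left\|\int_0^1 R(t)^{2n}\,dt\right\|.
\]
So the plan is to spend all the effort on the upper inequality, and then specialize to \(n=1\).

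\textbf{Upper bound.} The key observation is that each \(R(t)\) is a real part of a rotation of \(T\). With \(\theta=\pi t\),
\[
e^{-i\theta}T=(\cos\theta-i\sin\theta)(A+iB),
\]
and since \(A\) and \(B\) are self-adjoint,
\[
\Re\bigl(e^{-i\theta}T\bigr)=A\cos\theta+B\sin\theta=R(t).
\]
From the introduction we know \(\|\Re S\|\le\omega(S)\) for any operator \(S\), and \(\omega\) is unitarily-scalar invariant, so \(\omega(e^{-i\theta}T)=\omega(T)\). Hence
\[
\|R(t)\|\le\omega(T)\quad\text{for every } t\in[0,1].
\]
Because \(R(t)\) is self-adjoint, \(R(t)^{2n}\) is positive with \(\|R(t)^{2n}\|=\|R(t)\|^{2n}\le\omega^{2n}(T)\). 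The Bochner integral satisfies \(\bigl\|\int_0^1F(t)\,dt\bigr\|\le\int_0^1\|F(t)\|\,dt\). Applying this with \(F(t)=R(t)^{2n}\) gives
\[
\left\|\int_0^1R(t)^{2n}\,dt\right\|\le\int_0^1\|R(t)\|^{2n}\,dt\le\omega^{2n}(T).
\]
Alternatively one can argue through quadratic forms, which gives the same bound. The only step needing care is the identification of \(R(t)\) as \(\Re(e^{-i\pi t}T)\), which I regard as the crux.

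\textbf{Case \(n=1\).} In Step~5 of the proof of Theorem~\ref{thm:deep_cstar_state_integral} we computed
\[
\int_0^1R(t)^2\,dt=\frac{A^2+B^2}{2}=\frac{T^*T+TT^*}{4}.
\]
Also \(\binom{2}{1}/4=\tfrac12\). Inserting these into \eqref{eq:sharpened-integral-sandwich} with \(n=1\) gives
\[
\tfrac12\,\omega^2(T)\le\tfrac14\|T^*T+TT^*\|\le\omega^2(T).
\]
Rearranging the left inequality gives \(\omega^2(T)\le\tfrac12\|T^*T+TT^*\|\), and the right inequality is \(\tfrac14\|T^*T+TT^*\|\le\omega^2(T)\). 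Together these are exactly \eqref{eq:kittaneh-both-sides}.
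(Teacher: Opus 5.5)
Your proposal is correct and follows the paper's proof essentially step for step. The lower bound is a rescaling of Theorem~\ref{thm:deep_cstar_state_integral}(iv). The upper bound comes from identifying \(R(t)\) as the real part of a unimodular rotation of \(T\), so that \(\|R(t)\|\le\omega(T)\), and then applying the triangle inequality for the Bochner integral. The case \(n=1\) follows by substituting \(\int_0^1R(t)^2\,dt=\tfrac14(T^*T+TT^*)\). The only cosmetic difference is that you cite \(\|\Re S\|\le\omega(S)\) together with rotation invariance of \(\omega\), whereas the paper checks \(|\langle R(t)x,x\rangle|\le|\langle Tx,x\rangle|\) directly on quadratic forms.
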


\begin{proof}
The first inequality in \eqref{eq:sharpened-integral-sandwich} is equivalent to Theorem \ref{thm:deep_cstar_state_integral}(iv). For the upper bound, fix \(t\in[0,1]\). There exists \(\theta\in\mathbb R\) such that \(R(t)=\Re(e^{i\theta}T)\). Since \(R(t)\) is self-adjoint, for every unit vector \(x\),
\[
|\langle R(t)x,x\rangle|
=
|\Re(e^{i\theta}\langle Tx,x\rangle)|
\le
|\langle Tx,x\rangle|
\le
\omega(T).
\]
Thus, \(\|R(t)\|\le\omega(T)\), and hence
\[
\|R(t)^{2n}\|=\|R(t)\|^{2n}\le\omega^{2n}(T).
\]
Integrating over \(t\in[0,1]\) and using the triangle inequality for the Bochner integral,
\[
\left\|\int_0^1 R(t)^{2n}\,dt\right\|
\le
\int_0^1\|R(t)^{2n}\|\,dt
\le
\omega^{2n}(T).
\]
This proves \eqref{eq:sharpened-integral-sandwich}. For \(n=1\), we have
\[
\int_0^1 R(t)^2\,dt=\frac{A^2+B^2}{2}
\]
and \(A^2+B^2=\frac12(T^*T+TT^*)\). Replacing these into \eqref{eq:sharpened-integral-sandwich} and rearranging gives \eqref{eq:kittaneh-both-sides}.
\end{proof}

\begin{remark}\label{rem:max-phase-kittaneh}
The identities in Theorem \ref{thm:deep_cstar_state_integral} also give the max-phase representation
\begin{equation}\label{eq:max-phase}
\omega(T)
=
\max_{t\in[0,1]}\|R(t)\|
=
\max_{\theta\in[0,2\pi]}\|\Re(e^{i\theta}T)\|.
\end{equation}
Indeed, for every unit vector \(x\), if \(a=\langle Ax,x\rangle\) and \(b=\langle Bx,x\rangle\), then
\[
|\langle Tx,x\rangle|
=
\sqrt{a^2+b^2}
=
\max_{t\in[0,1]}|a\cos(\pi t)+b\sin(\pi t)|
=
\max_{t\in[0,1]}|\langle R(t)x,x\rangle|.
\]
Taking the supremum over \(\|x\|=1\) yields \eqref{eq:max-phase}. 
\end{remark}

\begin{remark}\label{rem:2-6}
Several points are worth recording.
\begin{enumerate}
\item[(i)]
\emph{Optimality of \(\pi/2\) in the first-order representation.} For
\(T=I\), we have \(A=I\), \(B=O\), and
\[
\int_{0}^{1}\bigl|\varphi\bigl(I\cos(\pi t)\bigr)\bigr|\,dt
=
\int_{0}^{1}|\cos(\pi t)|\,dt
=
\frac{2}{\pi}
\]
for every state \(\varphi\), whereas \(\omega(I)=1\). Hence, the factor
\(\pi/2\) in \eqref{eq:2-6-main} is best possible.

\item[(ii)]
\emph{The second-order representation is pointwise exact for vector states.}
For every unit vector \(x\in\mathbb{H}\), identity \eqref{eq:2-6-second} is exact. Consequently, taking the supremum over all unit vectors recovers \(\omega^2(T)\) as in \eqref{eq:2-6-second-sup}. In contrast, the first-order representation \eqref{eq:2-6-main} requires the supremum over all vector states to recover \(\omega(T)\). For a normal state \(\varphi\) that is not a vector state, the first-order integral in \eqref{eq:2-6-lower-phi} can be strictly smaller than \(\omega(T)\).

\item[(iii)]
\emph{Sharpness of the \(2n\)-th order bound.} For \(n=1\), inequality
\eqref{eq:2-6-upper} reduces to Kittaneh's bound, which is known to be sharp
(for instance, for \(T\) self-adjoint with \(B=O\), the upper bound is sharp, \(\omega^2(T)=\|A\|^{2}\), while the lower bound is strict with value \(\tfrac12\|A\|^{2}\)). For \(n\ge2\), the operator
\(\int_{0}^{1}R(t)^{2n}\,dt\) is a symmetrized polynomial in \(A\) and \(B\),
and its norm can be strictly smaller than \(\|A^{2}+B^{2}\|^{n}\) when \(A\)
and \(B\) fail to commute, so \eqref{eq:2-6-upper} produces genuinely new information beyond the naive power bound
\(\omega^{2n}(T)\le\|A^{2}+B^{2}\|^{n}\).

\item[(iv)]
\emph{No additive correction term is required.} The identities
\eqref{eq:2-6-main}--\eqref{eq:2-6-2n-sup} are exact as stated. In
particular, the multiplicative factor \(\pi/2\) in \eqref{eq:2-6-main}
cannot be replaced by an additive correction. For example, for \(T=I\) the
left-hand side of \eqref{eq:2-6-main} equals \(1\), while the integral
without the factor \(\pi/2\) equals \(2/\pi\); hence any compensation would
necessarily be multiplicative rather than additive.
\end{enumerate}
\end{remark}

The next corollary uses Theorem \ref{thm:deep_cstar_state_integral} with \(n=2\). It shows an explicit fourth-order bound. The bound is written as a norm of a polynomial in \(A\) and \(B\).
\begin{corollary}\label{cor:explicit-fourth-order}
Let \(T\in\mathbb B(\mathbb H)\), and put \(A=\Re T\) and \(B=\Im T\). Then
\begin{equation}\label{eq:explicit-fourth-order}
\omega^{4}(T)\le
\left\|
A^{4}+B^{4}
+\frac13\bigl(
A^{2}B^{2}+ABAB+AB^{2}A
+BA^{2}B+BABA+B^{2}A^{2}
\bigr)
\right\|.
\end{equation}
\end{corollary}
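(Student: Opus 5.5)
We apply Theorem~\ref{thm:deep_cstar_state_integral}(iv) with \(n=2\). Since \(\binom{4}{2}=6\) and \(4^2=16\), inequality \eqref{eq:2-6-upper} gives
\[
\omega^{4}(T)\le\frac{16}{6}\left\|\int_0^1 R(t)^4\,dt\right\|=\frac83\left\|\int_0^1 R(t)^4\,dt\right\|,
\qquad R(t)=A\cos(\pi t)+B\sin(\pi t).
\]
It therefore suffices to compute the Bochner integral \(\int_0^1R(t)^4\,dt\) explicitly as a noncommutative polynomial in \(A\) and \(B\).

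We expand \(R(t)^4\) noncommutatively as the sum over all \(16\) words \(W=W_1W_2W_3W_4\) with \(W_i\in\{A,B\}\). The word \(W\) carries the scalar coefficient \(\cos^{p}(\pi t)\sin^{q}(\pi t)\), where \(p\) is the number of \(A\)'s and \(q=4-p\) the number of \(B\)'s. Since each coefficient is scalar, the integral passes to the scalar factors. By the symmetry argument in the proof of Lemma~\ref{lem:trig-2n}, \(\int_0^1\cos^p(\pi t)\sin^q(\pi t)\,dt=0\) whenever \(p\) is odd. Hence only the words with \(p\in\{0,2,4\}\) survive.

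The Wallis values give
\[
\int_0^1\cos^4(\pi t)\,dt=\int_0^1\sin^4(\pi t)\,dt=\frac38,
\qquad
\int_0^1\cos^2(\pi t)\sin^2(\pi t)\,dt=\frac18.
\]
The words with \(p=2\) are exactly the six arrangements \(A^2B^2\), \(ABAB\), \(AB^2A\), \(BA^2B\), \(BABA\) and \(B^2A^2\). Therefore
\[
\int_0^1R(t)^4\,dt=\frac38\bigl(A^4+B^4\bigr)+\frac18\bigl(A^2B^2+ABAB+AB^2A+BA^2B+BABA+B^2A^2\bigr).
\]

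Multiplying by \(\frac83\) yields
\[
A^4+B^4+\frac13\bigl(A^2B^2+ABAB+AB^2A+BA^2B+BABA+B^2A^2\bigr),
\]
which is exactly the operator inside the norm in \eqref{eq:explicit-fourth-order}. This proves the corollary.

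The only point requiring care is the bookkeeping in the noncommutative expansion. We must keep all six orderings of the mixed terms rather than collapsing them to \(6A^2B^2\), and we must confirm that the odd-\(p\) words (\(A^3B\)-type and \(AB^3\)-type) vanish after integration over \([0,1]\). As a consistency check, setting \(A=B=I\) gives \(\int_0^1(\cos\pi t+\sin\pi t)^4\,dt=\frac38\cdot 4=\frac32\), in agreement with Lemma~\ref{lem:trig-2n}.
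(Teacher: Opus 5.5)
Your proof is correct and is the paper's argument: apply Theorem~\ref{thm:deep_cstar_state_integral}(iv) with $n=2$ to get the factor $\frac{8}{3}$, expand $R(t)^4$ noncommutatively, and integrate using $\int_0^1\cos^4(\pi t)\,dt=\int_0^1\sin^4(\pi t)\,dt=\frac38$ and $\int_0^1\cos^2(\pi t)\sin^2(\pi t)\,dt=\frac18$. You are slightly more explicit than the paper in noting that the odd-$p$ words vanish by the $s\mapsto\pi-s$ symmetry.
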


\begin{proof}
By Theorem \ref{thm:deep_cstar_state_integral} with \(n=2\),
\[
\omega^{4}(T)\le
\frac{8}{3}\left\|
\int_{0}^{1}
\bigl(A\cos(\pi t)+B\sin(\pi t)\bigr)^{4}\,dt
\right\|.
\]
Expanding the fourth power and using the orthogonality relations
\[
\int_0^1\cos^4(\pi t)\,dt
=
\int_0^1\sin^4(\pi t)\,dt
=
\frac38,
\qquad
\int_0^1\cos^2(\pi t)\sin^2(\pi t)\,dt
=
\frac18,
\]
we get
\[
\int_0^1R(t)^4\,dt
=
\frac38(A^4+B^4)
+\frac18\bigl(
A^{2}B^{2}+ABAB+AB^{2}A
+BA^{2}B+BABA+B^{2}A^{2}
\bigr).
\]
Replacing this into the inequality and multiplying by \(8/3\) yields
\eqref{eq:explicit-fourth-order}.
\end{proof}

\begin{remark}\label{rem:normal-exactness}
If \(T\) is normal, then equality holds in
\eqref{eq:explicit-fourth-order}; more generally, equality holds in
\eqref{eq:2-6-upper} for every \(n\in\mathbb N\). Thus, the higher-order
integral bounds are not merely estimates on the normal class.

Indeed, if \(T\) is normal, then \(A\) and \(B\) commute. Hence, by the
continuous functional calculus,
\[
\int_0^1R(t)^{2n}\,dt
=
\frac{\binom{2n}{n}}{4^n}(A^2+B^2)^n.
\]
Since \(T^*T=TT^*=A^2+B^2\), we obtain
\[
\frac{4^n}{\binom{2n}{n}}
\left\|\int_0^1R(t)^{2n}\,dt\right\|
=
\|(T^*T)^n\|
=
\|T\|^{2n}
=
\omega(T)^{2n}.
\]
Accordingly,
\begin{equation*}
\omega^{2n}(T)
=
\frac{4^n}{\binom{2n}{n}}
\left\|\int_0^1
\bigl(A\cos(\pi t)+B\sin(\pi t)\bigr)^{2n}\,dt
\right\|,
\qquad n\in\mathbb N.
\end{equation*}
In particular, the fourth-order estimate \eqref{eq:explicit-fourth-order} is an equality for every normal \(T\).
\end{remark}

\subsection*{Declarations}
\begin{itemize}
\item \textbf{Availability of data and materials}: Not applicable.
\item \textbf{Competing interests}: The authors declare no competing interests.
\item \textbf{Funding}: Not applicable.
\item \textbf{Authors' contributions}: The authors contributed equally to this paper. All authors have read and approved this version.
\end{itemize}

\vskip 0.5cm

{\tiny (W. Audeh) Department of Mathematics, University of Petra, Amman, Jordan

\textit{E-mail address:} waudeh@uop.edu.jo}

\vskip 0.3cm

{\tiny (F. Kittaneh) Department of Mathematics, The University of Jordan, Amman, Jordan \\
Department of Mathematics, Korea University, 02841, Seoul, South Korea

\textit{E-mail address:} fkitt@ju.edu.jo}

\vskip 0.3cm

{\tiny (H. R. Moradi) Department of Mathematics, Ma.C., Islamic Azad University, Mashhad, Iran 

\textit{E-mail address:} hrmoradi@mshdiau.ac.ir}

\end{document}